\newtheorem{thm}{Theorem}
\newtheorem{cor}[thm]{Corollary}
\newtheorem{lem}[thm]{Lemma}
\theoremstyle{definition}
\newtheorem{defn}{Definition}
\newtheorem{ex}{Example}
\newtheorem{rem}{Remark}
\def    \C      {\mathds{C}}
\def    \Z      {\mathds{Z}}
\def    \der     {{\scriptscriptstyle  D}}
\def    \sst     {\scriptscriptstyle}
\def    \inv    {{\scriptscriptstyle  \langle-1\rangle}}
\def    \K      {{\frak K}}
\def    \bo     {\boldsymbol}
\def    \mbf    {\mathbf}
\def    \ofx    {(\mathbf{x})}
\def    \punt   {{\boldsymbol{.}}}
\def    \A      {\mathcal{A}}
\def    \B      {\mathcal{B}}
\def    \ab     {{\frak a}}
\def    \deg     {\mathrm{deg}}
\title{Outcomes of the Abel identity}
\author{Pasquale Petrullo}
\begin{document}
\maketitle
\begin{center}\small
Dipartimento di Matematica e Informatica\\
Universit\`a degli Studi della Basilicata\\
Via dell'Ateneo Lucano 10, 85100 Potenza, Italia.\\
\texttt{p.petrullo@gmail.com}
\end{center}
\begin{abstract}
Through symbolic methods, we state explicit formulae for
Tchebychev, Gegenbauer, Meixner, Mittlag-Leffler, and Pidduck
polynomials. This is done by underlining the crucial role played
by the Abel identity in revisiting the Lagrange inversion formula and the theory of the Riordan
arrays.\\
\end{abstract}
{\
\textsf{\textbf{Keywords}}: umbral calculus, Abel polynomials, Lagrange inversion formula, Sheffer sequences, Riordan arrays, orthogonal polynomials.\\\\
\textsf{AMS subject classification}: 05A40, 33045, 05A15.
}
\section{Introduction}
Given a variable $x$ and a complex number $a$, then the sequence $(a_n(x,a))_{n\geq 0}$ of Abel polynomials is defined by $a_n(x,a)=x(x+na)^{n-1}$. They hold some interesting combinatorial insight~\cite{Sag}, and they satisfy nice identities such as
\begin{align}
\label{id:AIcl}(x+y)^n&=\sum_{k=0}^n\binom{n}{k}a_k(x,a)(y-ka)^{n-k},\\
\label{id:BIcl}a_n(x+y)&=\sum_{k=0}^n\binom{n}{k}a_k(x,a)a_{n-k}(y,a),\\
\label{id:Dabcl}D_xa_n(x,a)&=na_{n-1}(x+a,a).
\end{align}
In particular, in \cite{Com} the Abel identity \eqref{id:AIcl} and the binomial identity \eqref{id:BIcl} are obtained in a simple way by means of the key-property \eqref{id:Dabcl}. The polynomial sequences $(p_n(x))_{n\geq 0}$, with $\deg p_n=n$, satisfying $p_n(x+y)=\sum_{k=0}\binom{n}{k}p_k(y)p_{n-k}(y)$ are often called polynomials of binomial type, or binomial sequences. So, because of \eqref{id:BIcl} we see that $(a_n(x,a))_{n\geq 0}$ is of binomial type. Some investigation on the recursive properties of the matrix of the coefficients of a binomial sequence was carried out by Knuth~\cite{Kn}. The wider class of Sheffer polynomials, that includes binomial sequences, have been deeply studied by Roman~\cite{Rom}, through the {\lq\lq modern\rq\rq} umbral calculus originated with the seminal paper of Rota~\cite{Rota}. In 1994, Rota and Taylor~\cite{RT} presented a renewed version of the umbral calculus which is based on the notion of {\lq\lq umbra\rq\rq}. They called it the {\lq\lq classical umbral calculus\rq\rq}. Essentially, this symbolic method consists of an infinite set $\A=\{\alpha,\beta,\gamma,\ldots,\}$ of variables, called umbrae, and of the linear functional evaluation $E\colon\C[\A]\to\C$ holding the uncorrelation property $E[\alpha^n\beta^m\cdots\gamma^l]=E[\alpha^n]E[\beta^m]\cdots E[\gamma^l]$. Roughly speaking, we pass from the umbral calculus to the classical umbral calculus by associating each linear functional $L\colon\C[x]\to\C$ with the umbra $\alpha$ satisfying $L[x^n]=E[\alpha^n]$. The alphabet $\A$ is extended by adding copies $\alpha',\alpha'',\ldots$ of each umbra $\alpha$, and by introducing new symbols called auxiliary umbrae. More precisely, if $k$ is a positive integer then $k\punt\alpha$ is the auxiliary umbra defined to satisfy $E[(k\punt\alpha)^n]=E[(\alpha'+\alpha''+\cdots+\alpha^{\sst(k)})^n]$, for all $n\geq 0$, and the umbral Abel polynomial $\ab_n(\gamma,\alpha)=\gamma(\gamma+n\punt\alpha)^{n-1}$ may be considered. By setting $\gamma=y$, $\alpha=a$ and by assuming $E[y^n]=y^n$ for all $n\geq 0$, we get $E[\ab_n(y,a)]=a_n(y,a)$, or more compactly $\ab_n(y,a)\simeq a_n(y,a)$. By means of umbral Abel polynomials, Rota, Shen and Taylor~\cite{RST} have obtained a very nice characterization of the polynomials of binomial type: these polynomial sequences $(p_n(x))_{n\geq 0}$ precisely are the sequences such that $p_n(x)\simeq \ab_n(x,\alpha)$ for all $n\geq 0$, for a suitable $\alpha$. Hence, from an umbral point of view all polynomial sequences of binomial type have the form of an (umbral) Abel polynomial. In 2001, Taylor~\cite{Tay} showed that Abel polynomials encode not only binomial sequences, but the whole class of Sheffer sequences: $(s_n(x))_{n\geq 0}$ is a Sheffer sequence if and only if $s_n(x)\simeq\ab_n(x+\gamma,\alpha)$. In the same year Di Nardo and Senato~\cite{DNS1} began their exploration of the classical umbral calculus, by enriching the syntax and by stating for the first time the umbral Abel identity. Then, with Niederhausen~\cite{DNNS} they have also given a more complete description of the Sheffer sequences, by underlining connections with the Lagrange Inversion Formula, and by showing the way to move towards the Riordan arrays theory~\cite{BaHe,DFR,SGWW,Spr1,Spr2}. A deeper study of the Riordan arrays by means of Rota's umbrae can be found in \cite{AMPT}. In the last ten years, a classical umbral calculus approach to the classical cumulant theory have been carried out~\cite{DNS2,RS}. Abel polynomials come back on the scene in a paper by Di Nardo, Petrullo and Senato~\cite{DNPS}, where they play a crucial role in defining a unifying framework for classical, boolean and free cumulants, and provide a bridge with the combinatorics of parking functions through the volume polynomial of Pitman and Stanley~\cite{PitSta}. See also~\cite{PetSen} on this subject. Finally, a wider range of identities encoded by the umbral Abel polynomials can be found in \cite{Pet}.

In this paper, we present an elementary proof of the umbral Abel identity, by showing that \eqref{id:Dabcl} generalizes to umbral Abel polynomials. From this starting point, we show how some fundamental facts relating Sheffer sequences, Lagrange Inversion Formula and Riordan arrays arise in a trivial way and with one-line proofs. Moreover, we give an application of this methods in determining explicit formulae for the following class of polynomials: Tchebychev (II kind), Gegenbauer, Meixner (I kind), Mittlag-Laffler and Pidduck.
\section{The Abel identity and the Lagrange Inversion Formula.}
Let $\mbf{x}\cup\{z\}$ be a finite set of commuting variables.
Denote by $\C[\mbf{x}]$ the ring of polynomials with complex
coefficients in the variables of $\mbf{x}$, and by
$\C[\mbf{x}][[z]]$ the ring of all formal power series (f.p.s) of
type
\[f(z)=\sum_{n\geq 0}p_n\ofx\frac{z^n}{n!},\]
with $p_n\ofx\in\C[\mbf{x}]$ for all $n\geq 0$. As is customary,
the coefficient of $z^n$ in a f.p.s. $f(z)$ is denoted by
$[z^n]f(z)$, with $[z^0]f(z)=f(0)$. For all
$f(z)\in\C[\mbf{x}][[z]]$ such that $f(0)=1$, we introduce a
symbol $\alpha$, called the {\it umbra} of $f(z)$. The set $\A$ of
all umbrae will be called the {\it base alphabet}. The umbra of
$1$ is called the {\it augmentation} and it is denoted by
$\varepsilon$, the umbra of $1+z$ is named the {\it singleton} and
it is denoted by $\chi$, the umbra $\beta$ of $e^{e^z-1}$ is
called the {\it Bell umbra}. Then, the {\it evaluation} is the
only $\C[\mbf{x}]$-linear functional
$E\colon\C[\mbf{x}][\A]\to\C[\mbf{x}]$ such that
\begin{enumerate}
\item $E[{\alpha}^n]=n![z^n]f(z)$, if and only if $\alpha$ is the umbra of $f(z)$,
\item $E[{\alpha}^{n}{\gamma}^{m}\cdots {\delta}^{l}]=E[{\alpha}^{n}]E[{\gamma}^{m}]\cdots E[{\delta}^{l}]$, whenever $\alpha,\gamma,\ldots,\delta$ are pairwise distinct umbrae ({\it uncorrelation property}).
\end{enumerate}
Here, $\C[\mbf{x}][\A]$ denotes the ring of polynomials in the
variables of $\A$ and with coefficients in $\C[\mbf{x}]$. To avoid
any confusion, complex numbers in $\C$ will be denoted by
$a,b,\ldots$, polynomials in $\C[\mbf{x}]$ by $p\ofx,
q\ofx,\ldots$, umbrae in $\A$ by Greek letters
$\alpha,\beta,\ldots$, and elements in $\C[\mbf{x}][\A]$ by
$\bo{p},\bo{q},\ldots$. Each  $\bo{p}\in\C[\mbf{x}][\A]$ will be
named an {\it umbral polynomial}. We say that $p_n\ofx$ is the
$n$th {\it moment} of $\bo{p}$ if $E[\bo{p}^n]=p_n\ofx$, and that
the sequence $(p_n\ofx)_{n\geq 0}$ is {\it represented} by
$\bo{p}$ if $E[\bo{p}^n]=p_n\ofx$ for all $n\geq 0$. Note that, by
construction each sequence such that $p_0\ofx=1$ is represented by
a unique umbra $\alpha\in\A$. Now, we extend the action of $E$ to
the f.p.s. of type $e^{\bo{p} z}$, so that
\[E[e^{\bo{p} z}]=E\left[\sum_{n\geq 0}\bo{p}^n\frac{z^n}{n!}\right]=1+\sum_{n\geq 1}E[\bo{p}^n]\frac{z^n}{n!}.\]
We set $f_{\bo{p}}(z)=E[e^{\bo{p} z}]$ and say that
$f_{\bo{p}}(z)$ is {\it generating function} of $\bo{p}$. Note
that $e^{(\alpha+\gamma)z}=e^{\alpha z}e^{\gamma z}$ and because
of the uncorrelation property we have
$f_{\alpha+\gamma}(z)=f_\alpha(z)f_\gamma(z)$. Nevertheless, if
$n\in\Z$ then $e^{n\alpha z}=(e^{\alpha z})^n$ but
$f_{n\alpha}(z)\neq f_\alpha(z)^n$. Analogously, $e^{\gamma\alpha
z}=e^{\gamma\left(\log e^{\alpha z}\right)}$ but
$f_{\gamma\alpha}(z)\neq f_\gamma\left(\log\,f_\alpha(z)\right)$.
Then, in order to preserve some notational coherence, we define new
symbols, denoted by $n\punt\alpha$ and $\gamma\punt\alpha$,
called the {\it auxiliary umbrae}. A new alphabet $\B$ is defined to
contain them, and the action of $E$ is then extended to
$\C[\mbf{x}][\A\cup\B]$. In detail, for all $\bo{p}$ and $\bo{q}$
the auxiliary umbra $\bo{p}\punt\bo{q}$ is defined to satisfy
$E[e^{(\bo{p}\punt \bo{q})z}]=
f_{\bo{p}}\left(\log\,f_{\bo{q}}(z)\right)$, so that $E[e^{(a\punt
\alpha)z}]= f_\alpha(z)^a$ and $E[e^{(\alpha\punt a)z}]=
f_\alpha(az)$. Iterations of type $\delta\punt \gamma\punt\alpha$
are allowed, and if $\beta$ is the Bell umbra then we obtain
$f_{\gamma\punt\beta\punt\alpha}(z)=f_\gamma\left(\log\,f_\beta\left(\log\,f_\alpha(z)\right)\right)=f_\gamma\left(f_\alpha(z)-1\right)$.
Sometimes, $\gamma\punt\beta\punt\alpha$ is named the {\it
composition umbra} of $\gamma$ with $\alpha$. Further auxiliary
umbrae are denoted by $\alpha_\der$, with $\alpha\in\A$.  They are
named {\it derivative umbrae} and are defined by
$E[{\alpha_\der}^n]=E[D_\alpha\alpha^n]$, for all $n\geq 1$, where
$D_\alpha\colon \alpha^n\mapsto n\alpha^{n-1}$. Hence, $\alpha_\der$ satisfy
$f_{\alpha_\der}(z)-1=zf_\alpha(z)$. From now on, we will write
$\bo{p}\simeq \bo{q}$ ($\bo{p}$ is {\it equivalent} to $\bo{q}$)
whenever $E[\bo{p}]=E[\bo{q}]$, and $\bo{p}\equiv \bo{q}$
($\bo{p}$ is {\it similar} to $\bo{q}$) if and only if
$\bo{p}^n\simeq \bo{q}^n$ for all $n\geq 0$. Also, we will write
$e^{\bo{p}z}\simeq f(z)$ instead of $E[e^{\bo{p}z}]=f(z)$, and
$e^{\bo{p}z}\simeq e^{\bo{q}z}$ if
$E[e^{\bo{p}z}]=E[e^{\bo{q}z}]$. This provides compact expressions
like $\alpha\punt a\equiv a\alpha$ and $b\punt a\equiv ba$. In particular, we notice that
the restriction of the dot-operation on $\C$ (and $\C[\mbf{x}]$)
can be thought as the usual multiplication on $\C$. Also, we get
the duality relation $\chi\punt\beta\equiv\beta\punt\chi\equiv 1$.
As it is shown by equivalence \eqref{id:comp}, the moments of dot-operations of type
$\gamma\punt\beta\punt\alpha_\der$ holds a binomial-like expansion
in terms of the moments of $\alpha$ and $\gamma$. In fact, since
$e^{(\gamma\punt\beta\punt\alpha_\der)z}\simeq
e^{\gamma\left(zf_\alpha(z)\right)}$ then we obtain
\[e^{(\gamma\punt\beta\punt\alpha_\der)z}\simeq \sum_{i\geq 0}\gamma^i\frac{z^i}{i!}e^{(i\punt\alpha)z}\simeq \sum_{i\geq 0}\sum_{j\geq 0}\gamma^i(i\punt\alpha)^j\frac{z^i}{i!}\frac{z^j}{j!}.\]
Finally, we gain
\begin{equation}\label{id:comp}
(\gamma\punt\beta\punt\alpha_\der)^n\simeq \sum_{k=0}^{n}\binom{n}{k}\gamma^k(k\punt\alpha)^{n-k}.
\end{equation}
We stress that, for any fixed $\alpha$ there are pairwise distinct
auxiliary umbrae of type $1\punt\alpha$, $1\punt
1\punt\alpha$,\ldots, and of type $\alpha\punt 1$,
$\alpha\punt1\punt 1$, \ldots, representing the same sequence of
moments (i.e. similar auxiliary umbrae). We will refer to any of
them by means of more compact notations such as
$\alpha',\alpha'',\ldots$. It follows that
$n\punt\alpha\equiv\alpha'+\alpha''+\cdots+\alpha^{\sst(n)}$, and
then $n\punt\alpha$ behaves as a sum of $n$ distinct similar
umbrae. Moreover, to our aim it is not important to distinguish
umbrae in $\A$ from the auxiliary umbrae in $\B$. So that, the
term {\lq\lq umbra\rq\rq} will refer to any unspecified symbol in
$\A\cup\B$. The pair $(\C[\mbf{x}][\A\cup\B],E)$ provides a
so-called {\it saturated umbral calculus} over $\C[\mbf{x}]$. See \cite{DNS1,RT} for more details. From
now on, all sums of type $\alpha+(-k\punt\gamma)$ will be
abbreviated by $\alpha-k\punt\gamma$. Hence, we have
$k\punt\alpha-k\punt\alpha\equiv-k\punt\alpha+k\punt\alpha\equiv\varepsilon
\equiv 0$, for all $k$.
\begin{defn}
Let $\alpha,\gamma$ be umbrae, and let $n$ be a positive integer. Then, the umbral polynomial
\begin{equation}\label{def:ab}
\ab_n(\gamma,\alpha)=\gamma(\gamma+n\punt\alpha)^{n-1},
\end{equation}
is called an {\it umbral Abel polynomial}, or simply an {\it Abel polynomial}.
\end{defn}
If $\alpha$ and $\gamma$ are replaced by a complex number $a$ and
a variable $x\in\mbf{x}$ respectively, then we gain
$\ab_n(x,a)\simeq x(x+na)^{n-1}$ (being $n\punt a\equiv na$).
Hence, the definition above returns the classical Abel polynomials
$a_n(x,a)=x(x+na)^{n-1}$. The well-known property $D_x
a_n(x,a)=na_n(x+a,a)$ generalizes to $\ab_n(\gamma,\alpha)$ as
follows.
\begin{lem}
For all umbrae $\alpha,\gamma$, and for all $n\geq 1$ we have
\begin{equation}\label{id:Dab}
D_\gamma\ab_n(\gamma,\alpha)\simeq n\ab_{n-1}(\gamma+\alpha',\alpha).
\end{equation}
\end{lem}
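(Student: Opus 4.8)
The plan is to realize the auxiliary umbra $n\punt\alpha$ as a genuine sum of $n$ pairwise distinct similar copies of $\alpha$, apply the ordinary product rule for $D_\gamma$, and then finish by exploiting the exchangeability of similar umbrae. Throughout I treat the case $n\geq 2$; the case $n=1$ is immediate, since $D_\gamma\ab_1(\gamma,\alpha)=D_\gamma\gamma=1\simeq\ab_0(\gamma+\alpha',\alpha)$ under the convention $\ab_0=1$.

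First I would write $n\punt\alpha\equiv\alpha_1+\cdots+\alpha_n$, where $\alpha_1,\dots,\alpha_n$ are pairwise distinct similar copies of $\alpha$, each uncorrelated with $\gamma$, and set $S=\alpha_1+\cdots+\alpha_n$. Since $D_\gamma$ is the formal derivation $\gamma^k\mapsto k\gamma^{k-1}$ and treats the $\alpha_i$ as constants, the product rule gives the polynomial identity in $\C[\mbf{x}][\A\cup\B]$, valid before any evaluation,
\[
D_\gamma\ab_n(\gamma,\alpha)=D_\gamma\bigl[\gamma(\gamma+S)^{n-1}\bigr]=(\gamma+S)^{n-1}+(n-1)\gamma(\gamma+S)^{n-2}=(\gamma+S)^{n-2}(n\gamma+S).
\]
Next I would rewrite the right-hand side: choosing $\alpha'=\alpha_1$ and realizing the $(n-1)\punt\alpha$ occurring inside $\ab_{n-1}(\gamma+\alpha',\alpha)$ as $\alpha_2+\cdots+\alpha_n$, the inner argument becomes $\gamma+\alpha_1+(\alpha_2+\cdots+\alpha_n)=\gamma+S$, whence
\[
n\,\ab_{n-1}(\gamma+\alpha',\alpha)\equiv n(\gamma+\alpha_1)(\gamma+S)^{n-2}=(\gamma+S)^{n-2}(n\gamma+n\alpha_1).
\]
Subtracting, the claim reduces to the single equivalence $(\gamma+S)^{n-2}\,S\simeq n(\gamma+S)^{n-2}\alpha_1$.

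This last equivalence follows from symmetry. Because $\alpha_1,\dots,\alpha_n$ are pairwise distinct and similar and both $S$ and $(\gamma+S)^{n-2}$ are symmetric in their labels, the uncorrelation property yields $E[(\gamma+S)^{n-2}\alpha_i]=E[(\gamma+S)^{n-2}\alpha_1]$ for every $i$; summing over $i=1,\dots,n$ and using $S=\sum_i\alpha_i$ gives exactly $(\gamma+S)^{n-2}S\simeq n(\gamma+S)^{n-2}\alpha_1$, completing the proof.

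The main obstacle, and the only genuinely umbral point, is the bookkeeping around $n\punt\alpha$. One must resist treating it as $n\alpha$: the naive classical cancellation $(\gamma+n\punt\alpha)+(n-1)\gamma=n(\gamma+\alpha')$ is \emph{false} umbrally, since $n\punt\alpha\not\equiv n\alpha'$. The correct move is to keep $n\punt\alpha$ as an exchangeable sum of $n$ distinct copies, so that the distinguished copy $\alpha'$ appearing on the right is precisely one summand of $n\punt\alpha$ while $(n-1)\punt\alpha$ supplies the remaining $n-1$; once this identification is made, the symmetry (exchangeability) argument is exactly what replaces the classical cancellation.
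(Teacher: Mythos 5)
Your proof is correct and follows essentially the same route as the paper's: both apply the product rule to $\gamma(\gamma+n\punt\alpha)^{n-1}$, realize $n\punt\alpha$ as a sum of $n$ pairwise distinct similar copies of $\alpha$, and then use exchangeability of those copies to convert the term $(n\punt\alpha)(\gamma+n\punt\alpha)^{n-2}$ into $n\alpha'(\gamma+\alpha'+(n-1)\punt\alpha)^{n-2}$. Your only organizational difference is subtracting the two sides first and isolating the single equivalence $(\gamma+S)^{n-2}S\simeq n(\gamma+S)^{n-2}\alpha_1$, which is precisely the paper's equivalence \eqref{id:3} in disguise.
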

\begin{proof}
Elementary computations show that
\begin{equation}\label{id:1}
D_\gamma\,\gamma(\gamma+n\punt\alpha)^{n-1}=n\gamma\left(\gamma+n\punt\alpha\right)^{n-2}+\left(n\punt\alpha\right)\left(\gamma+n\punt\alpha\right)^{n-2}.
\end{equation}
It is also clear that
\begin{equation}\label{id:2}
n\,\gamma\left(\gamma+n\punt\alpha\right)^{n-2}\simeq n\gamma\left(\gamma+\alpha+(n-1)\punt\alpha\right)^{n-2}.
\end{equation}
Moreover, if $\alpha',\alpha'',\ldots,\alpha^{\sst(n)}$ are distinct umbrae similar to $\alpha$, then we have
\begin{align}\label{id:3}
\nonumber\left(n\punt\alpha\right)\left(\gamma+n\punt\alpha\right)^{n-2}&\simeq (\alpha'+\alpha''+\cdots+\alpha^{\sst(n)})\left(\gamma+\alpha'+\alpha''+\cdots+\alpha^{\sst(n)}\right)^{n-2}\\
\nonumber&\simeq \sum_{i=1}^n\alpha^{\sst(i)}\left(\gamma+\alpha^{\sst(i)}+(n-1)\punt\alpha\right)^{n-2}\\
&\simeq n\,\alpha\left(\gamma+\alpha+(n-1)\punt\alpha\right)^{n-2}.
\end{align}
The equivalence \eqref{id:Dab} follows by comparing \eqref{id:1},
\eqref{id:2} and \eqref{id:3}.
\end{proof}
\begin{thm}[Umbral Abel identity]
For all umbrae $\alpha,\gamma,\delta$ we have
\begin{equation}\label{id:AI}
(\delta+\gamma)^n\simeq \sum_{k=0}^n\binom{n}{k}(\delta+k\punt\alpha)^{n-k}\,\gamma(\gamma-k\punt\alpha)^{k-1}.
\end{equation}
\end{thm}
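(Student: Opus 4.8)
The plan is to imitate Comtet's classical argument for \eqref{id:AIcl}, replacing the key property \eqref{id:Dabcl} by its umbral upgrade \eqref{id:Dab}. Abbreviate the right-hand side by $S_n(\gamma)=\sum_{k=0}^n\binom{n}{k}(\delta+k\punt\alpha)^{n-k}\,\ab_k(\gamma,-\alpha)$, where $\ab_k(\gamma,-\alpha)=\gamma(\gamma-k\punt\alpha)^{k-1}$ and, crucially, within each summand the auxiliary umbra $k\punt\alpha$ is one and the same in both factors $(\delta+k\punt\alpha)$ and $(\gamma-k\punt\alpha)$. I would first dispose of the base cases $n=0,1$ by direct inspection, and then argue by induction on $n$.

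For the inductive step I would differentiate $S_n(\gamma)$ with respect to $\gamma$. Only the Abel factor depends on $\gamma$, so $D_\gamma S_n(\gamma)=\sum_{k=1}^n\binom{n}{k}(\delta+k\punt\alpha)^{n-k}\,D_\gamma\ab_k(\gamma,-\alpha)$. Applying \eqref{id:Dab} with $\alpha$ replaced by $-\alpha$ (so that $(-\alpha)'\equiv-\alpha'$) gives $D_\gamma\ab_k(\gamma,-\alpha)\simeq k\,\ab_{k-1}(\gamma-\alpha',-\alpha)$, which lowers the Abel index to $k-1$ and peels off a fresh copy $\alpha'$ of $\alpha$. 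Using $\binom{n}{k}k=n\binom{n-1}{k-1}$ and reindexing $j=k-1$, I expect the sum to collapse into $n$ times a copy of $S_{n-1}$ in which $\gamma$ has been shifted to $\gamma-\alpha'$ and $\delta$ to $\delta+\alpha'$; here the single copy absorbed into $(j+1)\punt\alpha\equiv\alpha'+j\punt\alpha$ in the $\delta$-factor must be the very copy $\alpha'$ produced by the lemma. The inductive hypothesis then yields $S_{n-1}\simeq\bigl((\delta+\alpha')+(\gamma-\alpha')\bigr)^{n-1}=(\delta+\gamma)^{n-1}$, the copy $\alpha'$ cancelling, so that $D_\gamma S_n(\gamma)\simeq n(\delta+\gamma)^{n-1}\simeq D_\gamma(\delta+\gamma)^n$.

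It remains to fix the constant of integration. Since the formal $\gamma$-derivatives of $S_n(\gamma)$ and $(\delta+\gamma)^n$ are equivalent, the two polynomials in $\gamma$ can differ only by a $\gamma$-free term, which I would pin down by evaluating at $\gamma\to\varepsilon\equiv0$. Every summand with $k\geq1$ carries the factor $\gamma$ and hence vanishes, leaving only the $k=0$ term $\delta^n$, which matches $(\delta+\varepsilon)^n\equiv\delta^n$. Therefore $S_n(\gamma)\simeq(\delta+\gamma)^n$, as claimed.

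I expect the main obstacle to be precisely the bookkeeping of the shared auxiliary umbra $k\punt\alpha$. The equivalence \eqref{id:Dab} is proved for an isolated Abel polynomial, whereas here its derivative is multiplied by $(\delta+k\punt\alpha)^{n-k}$, which is correlated with it through the common $k\punt\alpha$; thus the lemma cannot be invoked as a black box inside the product. To make the identification of the lemma's fresh copy $\alpha'$ with one copy split off from $k\punt\alpha$ in the $\delta$-factor rigorous, I would not cite \eqref{id:Dab} directly but rather re-run its proof in situ, expanding $k\punt\alpha\equiv\alpha'+\alpha''+\cdots+\alpha^{\sst(k)}$ as a sum of distinct similar umbrae and exploiting the uncorrelation property together with the symmetry among the copies exactly as in \eqref{id:1}--\eqref{id:3}, all while carrying the extra factor $(\delta+k\punt\alpha)^{n-k}$ along throughout.
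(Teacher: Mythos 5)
Your argument is correct in substance, but it takes a genuinely different route from the paper. You induct on $n$ in the style of Comtet's proof of \eqref{id:AIcl}: differentiate the right-hand side once, reindex via $\binom{n}{k}k=n\binom{n-1}{k-1}$, apply the inductive hypothesis with shifted arguments, and fix the constant of integration at $\gamma=\varepsilon$. The paper instead uses no induction: it expands $(\delta+\gamma)^n$ in the basis $(\ab_k(\gamma,-1\punt\alpha))_{k\geq 0}$ with unknown coefficients $b_k$, applies ${D_\gamma}^i$ for each $i$, and extracts all coefficients in one stroke by evaluating at $\gamma=k\punt\alpha'$, using the delta property $\ab_{i-k}(0,\alpha)\simeq\delta_{i,k}$ together with $-i\punt\alpha+i\punt\alpha\equiv 0$ --- a Taylor-type coefficient extraction in the Abel basis. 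Both proofs rest on the same key Lemma \eqref{id:Dab}. Your route is more elementary and self-contained; the paper's is shorter and yields the generalization \eqref{id:GAI} as an immediate by-product of the same computation. One small point you should make explicit: the integration step (``equivalent derivatives plus agreement at $\gamma=\varepsilon$ implies equivalence'') is legitimately carried out by first taking $\gamma$ to be a genuine variable $y\in\mbf{x}$, proving the polynomial identity in $y$, and then substituting an arbitrary umbra by linearity of $E$; likewise the inductive hypothesis is invoked with the umbral polynomials $\delta+\alpha'$ and $\gamma-1\punt\alpha''$ in place of umbrae, which is harmless since each is represented by an umbra.

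However, the premise you call crucial --- that the auxiliary umbra $k\punt\alpha$ is ``one and the same'' in the two factors of each summand --- is a misreading of the umbral syntax, and your closing paragraph worries about a problem that does not exist. By the paper's convention, $\gamma-k\punt\alpha$ abbreviates $\gamma+\left((-k)\punt\alpha\right)$, and $(-k)\punt\alpha$ and $k\punt\alpha$ are \emph{distinct} auxiliary symbols, hence uncorrelated; so within each summand the evaluation factors as $E[(\delta+k\punt\alpha)^{n-k}]\,E[\gamma(\gamma-k\punt\alpha)^{k-1}]$, the operator $D_\gamma$ sees only the Abel factor, and Lemma \eqref{id:Dab} applies as a black box --- no in-situ re-run of its proof is needed. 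Fortunately the misreading made your task look harder rather than easier, so nothing in your induction breaks: the final cancellation does not require identifying the lemma's fresh copy with the copy split off from $(j+1)\punt\alpha\equiv\alpha'+j\punt\alpha$ (indeed it cannot literally be the same, being a copy of $-1\punt\alpha$ rather than of $\alpha$); it suffices that $\alpha'+(-1\punt\alpha'')\equiv\varepsilon$ for uncorrelated copies, since generating functions multiply --- exactly the mechanism $k\punt\alpha-k\punt\alpha\equiv\varepsilon$ the paper itself uses.
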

\begin{proof}
By thinking of $((\delta+\gamma)^n)_{n\geq 0}$ and
$(\ab_n(\gamma,-1\punt\alpha))_{n\geq 0}$ as basis of a suitable
ring of umbral polynomials in the distinguished variable $\gamma$,
we may refer to the unique sequence $(b_n)_{n\geq 0}$ of
coefficients such that
\begin{equation}\label{id:4}
(\delta+\gamma)^n=\sum_{k=0}^n b_k\,\ab_k(\gamma,\alpha).
\end{equation}
Let us apply ${D_\gamma}^i$ to both sides in \eqref{id:AI}. Via \eqref{id:Dab} we gain
\[(n)_i(\delta+\gamma)^{n-i}\simeq\sum_{k=0}^nb_k\,(k)_i\ab_{k-i}(\gamma-i\punt\alpha',-1\punt\alpha),\]
with $(n)_i=n(n-1)\cdots(n-i+1)$. Finally, because of
$-i\punt\alpha+i\punt\alpha\equiv 0$, by evaluating on
$\gamma=k\punt\alpha'$ both sides in \eqref{id:4} we have
$\ab_{i-k}(0,\alpha)\simeq \delta_{i,k}$, and then
$(n)_k(\delta+k\punt\alpha)^{n-k}\simeq k!\,b_k$.
\end{proof}
As a by product we obtain the following generalization.
\begin{cor}
For all $\alpha, \gamma, \delta$, and for all polynomials $q(\gamma)$ of degree $n$ we have
\begin{equation}\label{id:GAI}
q(\delta+\gamma)\simeq \sum_{k=0}^n\frac{{D_\gamma}^kq(\delta+k\punt\alpha)}{k!}\,\gamma(\gamma-k\punt\alpha)^{k-1},
\end{equation}
with ${D_\gamma}^kq(\delta+k\punt\alpha)$ denoting the $k$th derivative of $q(\gamma)$ evaluated on $\gamma=\delta+k\punt\alpha$.
\end{cor}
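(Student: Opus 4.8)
The plan is to observe that the Umbral Abel identity \eqref{id:AI} is precisely the instance $q(\gamma)=\gamma^n$ of \eqref{id:GAI}, and to extend it to an arbitrary $q$ of degree $n$ by the $\C[\mbf{x}]$-linearity of $E$. Indeed, for the monomial $q(\gamma)=\gamma^n$ one has $D_\gamma^k q(\gamma)=(n)_k\gamma^{n-k}$, so that $D_\gamma^k q(\delta+k\punt\alpha)/k!=\binom{n}{k}(\delta+k\punt\alpha)^{n-k}$, and \eqref{id:GAI} collapses exactly to \eqref{id:AI}. Thus the content of the corollary is that this monomial case propagates to every polynomial.

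So, first I would write a general polynomial of degree $n$ as $q(\gamma)=\sum_{m=0}^n c_m\gamma^m$ with $c_m\in\C[\mbf{x}]$, whence $q(\delta+\gamma)=\sum_{m=0}^n c_m(\delta+\gamma)^m$. Applying \eqref{id:AI} to each summand and invoking linearity of $E$ gives
\[q(\delta+\gamma)\simeq\sum_{m=0}^n c_m\sum_{k=0}^m\binom{m}{k}(\delta+k\punt\alpha)^{m-k}\,\gamma(\gamma-k\punt\alpha)^{k-1}.\]
Next I would interchange the two sums, factoring out $\gamma(\gamma-k\punt\alpha)^{k-1}$, which depends only on $k$:
\[q(\delta+\gamma)\simeq\sum_{k=0}^n\gamma(\gamma-k\punt\alpha)^{k-1}\sum_{m=k}^n c_m\binom{m}{k}(\delta+k\punt\alpha)^{m-k}.\]

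The inner sum is then recognized as a Taylor coefficient. Since $D_\gamma^k\gamma^m=(m)_k\gamma^{m-k}$ and $\binom{m}{k}=(m)_k/k!$, one has $\sum_{m=k}^n c_m\binom{m}{k}\gamma^{m-k}=D_\gamma^kq(\gamma)/k!$, and substituting $\gamma=\delta+k\punt\alpha$ yields the claimed coefficient $D_\gamma^kq(\delta+k\punt\alpha)/k!$. Comparing with the last display then establishes \eqref{id:GAI}.

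The argument is routine, and the only point requiring care is this last step, where the formal derivative $D_\gamma^kq$ is evaluated at the umbra $\delta+k\punt\alpha$. I would make sure that substituting an umbral polynomial for $\gamma$ in $D_\gamma^kq(\gamma)$ and then applying $E$ is legitimate within the saturated umbral calculus, i.e. that the equivalence $\simeq$ is preserved under this substitution. Since $D_\gamma^kq(\gamma)$ is a genuine polynomial in $\gamma$ with coefficients in $\C[\mbf{x}]$, this is guaranteed by the $\C[\mbf{x}]$-linearity of $E$ together with the uncorrelation property, so no genuine obstacle arises; the result is a formal rewriting of \eqref{id:AI}.
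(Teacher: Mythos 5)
Your proposal is correct and takes essentially the same approach as the paper, whose entire proof is the one line {\lq\lq}It follows from \eqref{id:AI} because of the linearity of $E$ and of the ${D_\gamma}^k$'s{\rq\rq}; you have simply made explicit the monomial expansion $q(\gamma)=\sum_m c_m\gamma^m$, the interchange of sums, and the identification of the inner sum with ${D_\gamma}^kq(\delta+k\punt\alpha)/k!$. Your closing check that substituting $\gamma=\delta+k\punt\alpha$ into the formal derivative respects $\simeq$ is a harmless elaboration of the same linearity argument the paper invokes.
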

\begin{proof}
It follows from \eqref{id:AI} because of the linearity of $E$ and of the ${D_\gamma}^k$'s.
\end{proof}
If $E[\alpha]\neq 0$ then $f_\alpha(z)-1$ admits compositional
inverse $\left(f_\alpha(z)-1\right)^\inv$. Then we denote by
$\alpha^\inv$ an umbra such that
$f_{\alpha^\inv}(z)-1=\left(f_\alpha(z)-1\right)^\inv$, or
equivalently such that
$\alpha\punt\beta\punt\alpha^\inv\equiv\alpha^\inv\punt\beta\punt\alpha\equiv\chi$,
where $\chi$ is the singleton. For all umbrae $\gamma$ and
$\alpha$, we define $\K_{\gamma,\alpha}$ to be an auxiliary umbra
with moments
\begin{equation}\label{def:K}
{\K_{\gamma,\alpha}}^n\simeq\gamma(\gamma-n\punt\alpha)^{n-1}, \text{ for all $n\geq 1$}.
\end{equation}
Moreover, we will abbreviate $\K_{\alpha,\alpha}$ by $\K_\alpha$. The umbra $\K_\alpha$ plays a fundamental role within umbral cumulant theory~\cite{DNPS}. Note that, the equivalence \eqref{id:Dab} may be expressed as
\begin{equation}\label{id:DabK}
\K_{\gamma_\der,\alpha}\equiv(\K_{\gamma-1\punt\alpha,\alpha})_\der.
\end{equation}
\begin{thm}[Lagrange Inversion Formula]
For all umbrae $\alpha$ and $\gamma$ we have
\begin{equation}\label{id:LIF2}
(\gamma\punt\beta\punt{\alpha_\der}^\inv)^n\simeq \gamma(\gamma-n\punt\alpha)^{n-1}.
\end{equation}
\end{thm}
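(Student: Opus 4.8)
The plan is to read \eqref{id:LIF2} as the single equivalence $\gamma\punt\beta\punt{\alpha_\der}^\inv\equiv\K_{\gamma,\alpha}$ and to reach it by inverting one composition identity, so that the whole argument collapses to a one-liner once the auxiliary claim
\[\K_{\gamma,\alpha}\punt\beta\punt\alpha_\der\equiv\gamma\]
is established. First I would prove this auxiliary claim. Applying \eqref{id:comp} with the outer umbra $\gamma$ replaced by $\K_{\gamma,\alpha}$ --- taking the copy of $\alpha$ inside $\alpha_\der$ to be uncorrelated from the umbrae that build $\K_{\gamma,\alpha}$ --- gives $(\K_{\gamma,\alpha}\punt\beta\punt\alpha_\der)^n\simeq\sum_{k=0}^n\binom nk{\K_{\gamma,\alpha}}^k(k\punt\alpha)^{n-k}$. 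Substituting the moments ${\K_{\gamma,\alpha}}^k\simeq\gamma(\gamma-k\punt\alpha)^{k-1}$ from \eqref{def:K}, the right-hand side becomes, term by term, the right-hand side of the Abel identity \eqref{id:AI} specialised to $\delta\equiv\varepsilon\equiv0$, whose value is $\gamma^n$. Hence $(\K_{\gamma,\alpha}\punt\beta\punt\alpha_\der)^n\simeq\gamma^n$ for all $n$, which is the auxiliary claim.

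The theorem then follows immediately. Composing both sides of the auxiliary claim (on the inner slot) with $\punt\beta\punt{\alpha_\der}^\inv$, and using that $\punt\beta\punt$ is associative together with $\alpha_\der\punt\beta\punt{\alpha_\der}^\inv\equiv\chi$ and $\mu\punt\beta\punt\chi\equiv\mu$, I obtain $\gamma\punt\beta\punt{\alpha_\der}^\inv\equiv\K_{\gamma,\alpha}\punt\beta\punt\alpha_\der\punt\beta\punt{\alpha_\der}^\inv\equiv\K_{\gamma,\alpha}\punt\beta\punt\chi\equiv\K_{\gamma,\alpha}$. Reading off the $n$th moment through \eqref{def:K} gives $(\gamma\punt\beta\punt{\alpha_\der}^\inv)^n\simeq\gamma(\gamma-n\punt\alpha)^{n-1}$, which is \eqref{id:LIF2}.

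The main obstacle is the correlation bookkeeping rather than any real computation. Two points require care. When substituting $\K_{\gamma,\alpha}$ into \eqref{id:comp} the $\alpha$ carried by $\alpha_\der$ must stay independent of the $\alpha$ defining $\K_{\gamma,\alpha}$, so that the evaluation splits as $E[{\K_{\gamma,\alpha}}^k]\,E[(k\punt\alpha)^{n-k}]$; and the term-by-term match with \eqref{id:AI} hinges on interpreting $-k\punt\alpha$ as the auxiliary umbra $(-k)\punt\alpha$, of generating function $f_\alpha(z)^{-k}$ and uncorrelated from $k\punt\alpha$, rather than as the negation $-(k\punt\alpha)$; only under this reading do the factors $(k\punt\alpha)^{n-k}$ and $\gamma(\gamma-k\punt\alpha)^{k-1}$ separate identically on the two sides. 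I would also record why $\punt\beta\punt$ is associative --- the assignment $\mu\mapsto f_\mu(z)-1$ turns it into ordinary composition of power series vanishing at the origin --- since this is what legitimates cancelling $\alpha_\der$ against ${\alpha_\der}^\inv$ in the second step. As an independent check one can compute $n!\,[z^n]f_\gamma\bigl((zf_\alpha(z))^\inv\bigr)$ by the classical Lagrange inversion formula and recognise the outcome, via $E[e^{(-n\punt\alpha)z}]=f_\alpha(z)^{-n}$ and $E[\gamma e^{\gamma z}]=f_\gamma'(z)$, as $E[\gamma(\gamma-n\punt\alpha)^{n-1}]$.
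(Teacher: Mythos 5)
Your proposal is correct and takes essentially the same route as the paper: the paper's one-line proof likewise combines \eqref{id:comp}, \eqref{def:K}, and the Abel identity \eqref{id:AI} with $\delta=0$ to obtain $\K_{\gamma,\alpha}\punt\beta\punt\alpha_\der\equiv\gamma$, and then inverts through ${\alpha_\der}^\inv$ to conclude $\K_{\gamma,\alpha}\equiv\gamma\punt\beta\punt{\alpha_\der}^\inv$. Your additional bookkeeping (uncorrelation of $k\punt\alpha$ and $-k\punt\alpha$ as distinct auxiliary umbrae, and associativity of $\punt\beta\punt$ justifying the cancellation against ${\alpha_\der}^\inv$) simply makes explicit what the paper leaves implicit.
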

\begin{proof}
By means of \eqref{id:comp}, \eqref{def:K}, and \eqref{id:AI} with $\delta=0$, we have $\K_{\sst\gamma,\alpha}\punt\beta\punt\alpha_\der\equiv\gamma$, and finally $\K_{\gamma,\alpha}\equiv\gamma\punt\beta\punt{\alpha_\der}^\inv$.
\end{proof}
In terms of generating functions, the equivalence \eqref{id:LIF2}
is nothing but one of the most general versions of the famous
Lagrange Inversion Formula,
\[[z^n]f_\gamma\left(\left(zf_\alpha(z)\right)^\inv\right)=\frac{1}{n}[z^{n-1}]f_\gamma'(z)\left(\frac{1}{f_\alpha(z)}\right)^{n}.\]
We stress that, in this setting, it is merely a corollary of the
Abel identity stated by a one-line proof. Finally, since
$\chi\equiv\varepsilon_\der$, via \eqref{id:DabK} and
\eqref{id:LIF2} we obtain
${\alpha_\der}^\inv\equiv\K_{\varepsilon_\der,\alpha}\equiv(\K_{-1\punt\alpha,\alpha})_\der$.
However, it is $\K_{-1\punt\alpha,\alpha}\equiv-1\punt\K_\alpha$,
and then we obtain
\begin{equation}\label{id:Kinv}
\alpha_\der\equiv{(-1\punt\K_\alpha)_\der}^\inv.
\end{equation}
\section{Explicit formulae for polynomial sequences}
Let $x\in\mbf{x}$. A polynomial sequence $(s_n(x))_{n\geq 0}$ is
said to be a {\it Sheffer sequence} if and only if there are
$A(z),B(z)\in\C[[z]]$ with $A(0)\neq 0\neq B(0)$, and such that
\begin{equation}\label{def:sheff}
\sum_{n\geq 0}s_n(x)\frac{z^n}{n!}=A(z)e^{xzB(z)}.
\end{equation}
When $A(z)=1$ the polynomial sequence is said to be a {\it
binomial sequence}, or an {\it associated sequence}. Sheffer
sequences can be also characterized as the sequences
$(s_n(x))_{n\geq 0}$, with $\deg\,s_n(x)=n$, satisfying the
following {\it Sheffer identity},
\begin{equation}\label{id:sheff}
s_n(x+y)=\sum_{k=0}^n\binom{n}{k}p_k(x)s_{n-k}(y),
\end{equation}
with $(p_n(x))_{n\geq 0}$ being a binomial sequence. Analogously,
the binomial sequences are the unique sequences $(p_n(x))_{n\geq
0}$, with $deg\,p_n(x)=n$, satisfying the {\it binomial identity},
\begin{equation}\label{id:bin}
p_n(x+y)=\sum_{k=0}^n\binom{n}{k}p_k(x)p_{n-k}(y).
\end{equation}
Abel polynomials $(a_n(x,a))_{n\geq 0}$ are binomial with
$B(z)=\left(ze^{-az}\right)^\inv$. By iterating \eqref{id:Dab} we get
$D_\gamma^k\ab_n(\gamma,\alpha)\simeq
(n)_k\ab_{n-k}(\gamma+k\punt\alpha)$. Then, by replacing $\alpha$
with $-1\punt\alpha$ in \eqref{id:GAI}, and choosing
$p(\gamma)=\ab_n(\gamma,\alpha)$, we recover the binomial identity
of umbral Abel polynomials,
\[\ab_n(\gamma+\delta,\alpha)\simeq \sum_{k=0}^n\binom{n}{k}\ab_k(\gamma,\alpha)\ab_n(\delta,\alpha').\]
In the following, by writing a {\lq\lq Sheffer sequence\rq\rq} or
a {\lq\lq binomial sequence\rq\rq} we will always mean a {\lq\lq
Sheffer sequence with $A(0)=B(0)=1$\rq\rq} or a {\lq\lq binomial
sequence with $B(0)=1$\rq\rq}. It is not difficult to see that we
are restricting our analysis to the case $s_n(x)=x^n+\text{lower
terms}$. Let $\gamma$ and $\alpha$ be umbrae such that $e^{\gamma
z}\simeq A(z)$ and $e^{\alpha z}\simeq B(z)$, then the umbral
translation of \eqref{def:sheff} is
\begin{equation}\label{id:sheffumbra}
s_n(x)\simeq (\gamma+x\punt\beta\punt\alpha_\der)^n,
\end{equation}
and Sheffer sequences are moments of umbrae of type
$\gamma+x\punt\beta\punt\alpha_\der$. Thanks to \eqref{id:comp},
straightforward computations give
\begin{equation}\label{id:sheffmom}
s_n(x)\simeq \sum_{k=0}^n\binom{n}{k}(\gamma+k\punt\alpha)^{n-k}x^k,
\end{equation}
and the coefficient $s_{n,k}$ of $x^k$ in $s_n(x)$ have the following closed expression,
\begin{equation}\label{id:sheffcoeff}
s_{n,k}\simeq \binom{n}{k}(\gamma+k\punt\alpha)^{n-k}.
\end{equation}
From now on, the polynomial sequence $(s_n(x))_{n\geq 0}$ defined
by \eqref{id:sheffumbra} will be named the {\it Sheffer sequence}
of $(\gamma,\alpha)$. Moreover, we will write
\[(\gamma,\alpha)=(s_n(x))_{n\geq 0}\]
to express that one of the equivalent identities
\eqref{id:sheffumbra}, \eqref{id:sheffmom}, and
\eqref{id:sheffcoeff} hold. The {\it Appell sequences} are the
Sheffer sequences of $(\gamma,\varepsilon)$, the {\it binomial
sequences}, also named {\it associated sequences}~\cite{Rom}, are the Sheffer
sequences of $(\varepsilon,\alpha)$. If we lift to an umbral
level, then we may appreciate that Sheffer sequences have the form
of Abel polynomials.
\begin{thm}[Abel representation of Sheffer sequences]
A polynomial sequence $(s_n(x))_{n\geq 0}$ is the Sheffer sequence of $(\gamma,\alpha)$ if and only if
\[s_n(x)\simeq (x+\K_{\gamma,\alpha})\left(x+\K_{\gamma,\alpha}+n\punt\K_{\alpha}\right)^{n-1}.\]
\end{thm}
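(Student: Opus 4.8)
The plan is to transform the umbral Sheffer representation $s_n(x)\simeq(\gamma+x\punt\beta\punt\alpha_\der)^n$ of \eqref{id:sheffumbra} into the claimed Abel form through two manipulations: first absorbing the umbra $\gamma$ into the composition, so that the entire Sheffer umbra becomes a single composition umbra of type $\delta\punt\beta\punt\alpha_\der$, and then invoking the Lagrange Inversion Formula to read its moments off as an Abel polynomial. Since every link in the resulting chain is a genuine equivalence, the stated ``if and only if'' will drop out at once by reversing the chain, so no separate argument for the converse will be needed.

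First I would record the additivity of the dot-composition in its left slot: because $f_{\delta_1+\delta_2}(z)=f_{\delta_1}(z)f_{\delta_2}(z)$, substituting $z\mapsto f_{\alpha_\der}(z)-1$ gives $(\delta_1+\delta_2)\punt\beta\punt\alpha_\der\equiv\delta_1\punt\beta\punt\alpha_\der+\delta_2\punt\beta\punt\alpha_\der$. Combined with the identity $\K_{\gamma,\alpha}\punt\beta\punt\alpha_\der\equiv\gamma$ already established inside the proof of \eqref{id:LIF2}, this yields
\[(x+\K_{\gamma,\alpha})\punt\beta\punt\alpha_\der\equiv x\punt\beta\punt\alpha_\der+\gamma,\]
and hence $s_n(x)\simeq\bigl((x+\K_{\gamma,\alpha})\punt\beta\punt\alpha_\der\bigr)^n$. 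This is the step I expect to be the crux: it is where the constant Appell umbra $\gamma$ gets folded into the composition, exhibiting the Sheffer umbra as a pure composition umbra to which inversion can be applied; everything afterwards is mechanical.

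Next I would invoke \eqref{id:Kinv}, namely $\alpha_\der\equiv{(-1\punt\K_\alpha)_\der}^\inv$, to rewrite the composition as $(x+\K_{\gamma,\alpha})\punt\beta\punt{(-1\punt\K_\alpha)_\der}^\inv$. Applying \eqref{id:LIF2} with $x+\K_{\gamma,\alpha}$ in the role of $\gamma$ and $-1\punt\K_\alpha$ in the role of $\alpha$ then gives
\[\bigl((x+\K_{\gamma,\alpha})\punt\beta\punt\alpha_\der\bigr)^n\simeq (x+\K_{\gamma,\alpha})\bigl(x+\K_{\gamma,\alpha}-n\punt(-1\punt\K_\alpha)\bigr)^{n-1},\]
and since $-n\punt(-1\punt\K_\alpha)\equiv n\punt\K_\alpha$ (the reciprocal generating functions cancel), the right-hand side is precisely $(x+\K_{\gamma,\alpha})(x+\K_{\gamma,\alpha}+n\punt\K_\alpha)^{n-1}$. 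Chaining the two displays proves the forward implication, and because each equivalence is reversible the converse is immediate. The only points demanding care are the bookkeeping of the auxiliary umbra $-1\punt\K_\alpha$ and the verification that $-n\punt(-1\punt\K_\alpha)\equiv n\punt\K_\alpha$; apart from these, the result is a one-line consequence of \eqref{id:LIF2} and \eqref{id:Kinv}.
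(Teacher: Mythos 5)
Your proof is correct and follows essentially the same route as the paper: the paper's own one-line proof is exactly the chain $\gamma+x\punt\beta\punt\alpha_\der\equiv\K_{\gamma,\alpha}\punt\beta\punt\alpha_\der+x\punt\beta\punt\alpha_\der\equiv(x+\K_{\gamma,\alpha})\punt\beta\punt{(-1\punt\K_{\alpha})_\der}^\inv$, concluded via \eqref{id:LIF2}. You merely spell out the details the paper leaves implicit, namely the left-additivity of the dot-composition, the appeal to \eqref{id:Kinv}, and the verification $-n\punt(-1\punt\K_\alpha)\equiv n\punt\K_\alpha$, all of which are sound.
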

\begin{proof}
We have
$\gamma+x\punt\beta\punt\alpha_\der\equiv\K_{\gamma,\alpha}\punt\beta\punt\alpha_\der+x\punt\beta\punt\alpha_\der\equiv(x+\K_{\gamma,\alpha})\punt\beta\punt{(-1\punt\K_{\alpha})_\der}^\inv$,
and the claim follows via \eqref{id:LIF2}.
\end{proof}
\begin{cor}[Appell sequences]
A sequence $(s_n(x))_{n\geq 0}$ is the Appell sequence of $(\gamma,\varepsilon)$ if and only if
\[s_n(x)\simeq \left(x+\gamma\right)^{n}.\]
\end{cor}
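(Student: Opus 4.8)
The plan is to read off the corollary directly from the preceding \textbf{Abel representation of Sheffer sequences}, specialized to $\alpha=\varepsilon$. By definition the Appell sequence of $(\gamma,\varepsilon)$ is just the Sheffer sequence of $(\gamma,\varepsilon)$, so the representation theorem already gives
\[s_n(x)\simeq (x+\K_{\gamma,\varepsilon})\left(x+\K_{\gamma,\varepsilon}+n\punt\K_{\varepsilon}\right)^{n-1},\]
and all that remains is to evaluate the two auxiliary umbrae $\K_{\gamma,\varepsilon}$ and $\K_\varepsilon$.

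First I would record the basic degeneracy of the augmentation: for every $n$ one has $E[e^{(n\punt\varepsilon)z}]=f_\varepsilon(z)^n=1$, so $n\punt\varepsilon\equiv\varepsilon\equiv 0$, and likewise $-n\punt\varepsilon\equiv 0$. Feeding this into the defining moments \eqref{def:K} and using uncorrelation to expand $(\gamma-n\punt\varepsilon)^{n-1}$, only the term free of $n\punt\varepsilon$ survives, so that for all $n\geq 1$
\[{\K_{\gamma,\varepsilon}}^n\simeq\gamma(\gamma-n\punt\varepsilon)^{n-1}\simeq\gamma\cdot\gamma^{n-1}=\gamma^n,\]
whence $\K_{\gamma,\varepsilon}\equiv\gamma$. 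Setting $\gamma=\varepsilon$ in this identity gives $\K_\varepsilon=\K_{\varepsilon,\varepsilon}\equiv\varepsilon\equiv 0$, and therefore $n\punt\K_\varepsilon\equiv 0$ as well.

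Finally I would substitute these evaluations into the displayed representation: replacing $\K_{\gamma,\varepsilon}$ by $\gamma$ and $n\punt\K_\varepsilon$ by $0$ collapses the right-hand side to $(x+\gamma)(x+\gamma)^{n-1}=(x+\gamma)^n$. Since every step is an equivalence and the parent theorem is already an ``if and only if,'' the chain is reversible, giving the characterization in both directions. I do not expect a real obstacle here; the only point needing a little care is the justification that the augmentation kills the dot-powers, i.e.\ that $n\punt\varepsilon\equiv 0$, since this is exactly what makes both $\K$-umbrae degenerate to $\gamma$ and to $0$ respectively.
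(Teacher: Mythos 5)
Your proposal is correct and takes exactly the paper's route: the paper's entire proof is ``Set $\alpha=\varepsilon$ in the theorem above,'' i.e.\ specialize the Abel representation of Sheffer sequences. You simply make explicit the evaluations the paper leaves implicit, namely that $n\punt\varepsilon\equiv 0$ forces $\K_{\gamma,\varepsilon}\equiv\gamma$ and $n\punt\K_\varepsilon\equiv 0$, which is a sound and complete filling-in of the same one-line argument.
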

\begin{proof}
Set $\alpha=\varepsilon$ in the theorem above.
\end{proof}
\begin{cor}[Binomial sequences]
A sequence $(s_n(x))_{n\geq 0}$ is the binomial sequence of $(\varepsilon,\alpha)$ if and only if
\[s_n(x)\simeq x\left(x+n\punt\K_\alpha\right)^{n-1}.\]
\end{cor}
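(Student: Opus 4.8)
The plan is to obtain this as the $\gamma=\varepsilon$ specialization of the Abel representation theorem, mirroring the way the Appell corollary arises from the $\alpha=\varepsilon$ specialization. Since by definition a binomial sequence is the Sheffer sequence of $(\varepsilon,\alpha)$, setting $\gamma=\varepsilon$ in that theorem gives
\[s_n(x)\simeq (x+\K_{\varepsilon,\alpha})\left(x+\K_{\varepsilon,\alpha}+n\punt\K_{\alpha}\right)^{n-1},\]
so the only thing left to do is to identify the umbra $\K_{\varepsilon,\alpha}$.

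First I would read off its moments from the defining equivalence \eqref{def:K}, namely ${\K_{\varepsilon,\alpha}}^n\simeq \varepsilon(\varepsilon-n\punt\alpha)^{n-1}$ for all $n\geq 1$. Then I would use that the augmentation satisfies $E[\varepsilon^m]=0$ for every $m\geq 1$, i.e. $\varepsilon\equiv 0$. Expanding $\varepsilon(\varepsilon-n\punt\alpha)^{n-1}$ as a polynomial in the umbrae, every monomial inherits at least one factor $\varepsilon$ from the front; by the uncorrelation property the evaluation of each such monomial factors as $E[\varepsilon^{m}]\cdot E[\cdots]=0$ for some $m\geq 1$. Hence ${\K_{\varepsilon,\alpha}}^n\simeq 0$ for all $n\geq 1$, which is exactly the statement $\K_{\varepsilon,\alpha}\equiv\varepsilon\equiv 0$.

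Finally I would substitute $\K_{\varepsilon,\alpha}\equiv 0$ back into the displayed specialization. Because $\equiv$ permits replacing one umbra by a similar one inside any umbral polynomial, $x+\K_{\varepsilon,\alpha}\equiv x$ and $x+\K_{\varepsilon,\alpha}+n\punt\K_\alpha\equiv x+n\punt\K_\alpha$, so
\[s_n(x)\simeq x\left(x+n\punt\K_\alpha\right)^{n-1},\]
as claimed; the converse direction is the same chain of equivalences read backwards, using that the correspondence in the Abel representation theorem is an \emph{if and only if}.

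There is essentially no hard step here: the result is a one-parameter specialization of a theorem already proved. The only point requiring a moment's care—and the place where a slick one-line proof could silently gloss over something—is the verification that $\K_{\varepsilon,\alpha}\equiv 0$. I would make sure to invoke the uncorrelation property explicitly, since the inner and outer occurrences of $\varepsilon$ coincide and one must check that collecting them into a single power $\varepsilon^{m}$ with $m\geq 1$ still annihilates every monomial; once that is in hand the substitution is purely formal.
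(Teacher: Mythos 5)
Your proposal is correct and matches the paper's proof, which is exactly the one-line specialization ``set $\gamma=\varepsilon$ in the theorem above.'' Your explicit verification that $\K_{\varepsilon,\alpha}\equiv\varepsilon\equiv 0$ (via $E[\varepsilon^m]=0$ for $m\geq 1$ and the uncorrelation property) just fills in the step the paper leaves tacit.
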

\begin{proof}
Set $\gamma=\varepsilon$ in the theorem above.
\end{proof}
Given the Sheffer sequences $(\gamma,\alpha)=(s_{n}(x))_{n\geq 0}$
and $(\eta,\delta)=(r_n(x))_{n\geq 0}$, one may consider the
so-called {\lq\lq umbral composition\rq\rq}
\[(\gamma,\alpha)(\eta,\delta)=(s_n(\bo{r}(x)))_{n\geq 0},\]
which is the polynomial sequence defined by~\cite{Rom}
\[s_n(\bo{r}(x))=\sum_{k=0}^n s_{n,k}r_k(x).\]
Being $s_n(x)\simeq (\gamma+x\punt\beta\punt\alpha_\der)^n$ and $r_n(x)\simeq (\eta+x\punt\beta\punt\delta_\der)^n$, then the umbra representing $(s_n(\bo{r}(x))_{n\geq 0}$ is obtained by replacing $x$ with $\eta+x\punt\beta\punt\delta_\der$ in $\gamma+x\punt\beta\punt\delta_\der$. Hence, we get
\[s_n(\bo{r}(x))\simeq \left(\gamma+\eta\punt\beta\punt\alpha_\der+x\punt\beta\punt\delta_\der\punt\beta\punt\alpha_\der\right)^n.\]
Via generating functions it is quite simple to see that $\delta_\der\punt\beta\punt\alpha_\der\equiv\left(\alpha+\delta\punt\beta\punt\alpha_\der\right)_\der$, and the following theorem is proved.
\begin{thm}[Umbral composition of Sheffer sequences]
The umbral composition of the Sheffer sequence of $(\gamma,\alpha)$
with the Sheffer sequence of $(\eta,\delta)$ is the Sheffer sequence of $(\gamma+\delta\punt\beta\punt\alpha_\der,\alpha+\delta\punt\beta\punt\alpha_\der)$.
In symbols, we have
\begin{equation}\label{id:umbcomp}
(\gamma,\alpha)(\eta,\delta)=(\gamma+\eta\punt\beta\punt\alpha_\der,\alpha+\delta\punt\beta\punt\alpha_\der).
\end{equation}
\end{thm}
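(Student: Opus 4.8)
The plan is to carry out, at the umbral level, the variable substitution that defines umbral composition, and then to collapse the resulting triple dot-product into a single derivative umbra. First I would recall from \eqref{id:sheffmom} that $s_n(x)\simeq\sum_{k=0}^n\binom{n}{k}(\gamma+k\punt\alpha)^{n-k}x^k$, so that by \eqref{id:sheffcoeff} the coefficients are $s_{n,k}\simeq\binom{n}{k}(\gamma+k\punt\alpha)^{n-k}$, while $r_k(x)\simeq(\eta+x\punt\beta\punt\delta_\der)^k$. Hence the composition $s_n(\bo{r}(x))=\sum_{k=0}^n s_{n,k}\,r_k(x)$ is obtained, symbolically, by substituting the umbra $\eta+x\punt\beta\punt\delta_\der$ that represents $\bo{r}(x)$ for the variable $x$ in the expansion of $s_n$; reading \eqref{id:sheffmom} backwards, this resums to the representing umbra $\gamma+(\eta+x\punt\beta\punt\delta_\der)\punt\beta\punt\alpha_\der$. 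Distributing $\punt\beta\punt\alpha_\der$ across the inner sum then gives $s_n(\bo{r}(x))\simeq(\gamma+\eta\punt\beta\punt\alpha_\der+x\punt\beta\punt\delta_\der\punt\beta\punt\alpha_\der)^n$.

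The crux is to show that the triple composition occurring in the last term is itself a derivative umbra, namely $\delta_\der\punt\beta\punt\alpha_\der\equiv(\alpha+\delta\punt\beta\punt\alpha_\der)_\der$, which I would verify by comparing generating functions. For the left-hand side, the composition rule $f_{\mu\punt\beta\punt\nu}(z)=f_\mu(f_\nu(z)-1)$ together with $f_{\alpha_\der}(z)-1=zf_\alpha(z)$ and $f_{\delta_\der}(w)=1+wf_\delta(w)$ yield $f_{\delta_\der\punt\beta\punt\alpha_\der}(z)=f_{\delta_\der}(zf_\alpha(z))=1+zf_\alpha(z)\,f_\delta(zf_\alpha(z))$. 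For the right-hand side, $f_{\alpha+\delta\punt\beta\punt\alpha_\der}(z)=f_\alpha(z)f_{\delta\punt\beta\punt\alpha_\der}(z)=f_\alpha(z)\,f_\delta(zf_\alpha(z))$, so applying the derivative-umbra rule once more gives $f_{(\alpha+\delta\punt\beta\punt\alpha_\der)_\der}(z)=1+zf_\alpha(z)\,f_\delta(zf_\alpha(z))$. The two generating functions coincide, proving the equivalence.

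Substituting this equivalence into the expression from the first step yields $s_n(\bo{r}(x))\simeq(\gamma+\eta\punt\beta\punt\alpha_\der+x\punt\beta\punt(\alpha+\delta\punt\beta\punt\alpha_\der)_\der)^n$, which is exactly the representing form \eqref{id:sheffumbra} of the Sheffer sequence of the pair $(\gamma+\eta\punt\beta\punt\alpha_\der,\alpha+\delta\punt\beta\punt\alpha_\der)$, establishing \eqref{id:umbcomp}. I expect the only genuinely non-routine step to be the generating-function identity for $\delta_\der\punt\beta\punt\alpha_\der$; the rest is bookkeeping, the main risk being to keep straight which umbra plays the role of the Appell part $A(z)$ and which plays the role of the binomial part $B(z)$ during the substitution.
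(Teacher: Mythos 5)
Your proposal is correct and follows essentially the same route as the paper: substitute the representing umbra $\eta+x\punt\beta\punt\delta_\der$ for $x$, distribute the dot-composition to get $s_n(\bo{r}(x))\simeq(\gamma+\eta\punt\beta\punt\alpha_\der+x\punt\beta\punt\delta_\der\punt\beta\punt\alpha_\der)^n$, and then reduce the triple composition via the key equivalence $\delta_\der\punt\beta\punt\alpha_\der\equiv(\alpha+\delta\punt\beta\punt\alpha_\der)_\der$, checked on generating functions. Your explicit computation $f_{\delta_\der\punt\beta\punt\alpha_\der}(z)=1+zf_\alpha(z)f_\delta(zf_\alpha(z))=f_{(\alpha+\delta\punt\beta\punt\alpha_\der)_\der}(z)$ correctly supplies the detail that the paper merely asserts is {\lq\lq}quite simple to see{\rq\rq}.
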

Note that, $(\varepsilon,\varepsilon)=(x^n)_{n\geq 0}$ is the
identity with respect to the umbral composition. Let
$A(z),B(z)\in\C[[z]]$ with $A(0)\neq 0\neq  B(0)$. The {\it
exponential Riordan array} defined by $(A(z),B(z))$ is the
infinite lower triangular matrix $(s_{n,k})_{n,k}$ determined by
\begin{equation}\label{def:Rio}
s_{n,k}=n![z^{n}]A(z)\frac{\left(zB(z)\right)^{k}}{k!}.
\end{equation}
As we have done for Sheffer sequences, we restrict ourselves to
the case $A(0)=B(0)=1$, so that there are $\alpha$ and $\gamma$
such that $f_\gamma(z)=A(z)$ and $f_\alpha(z)=B(z)$. This way, we
obtain
\begin{equation}\label{id:Rio}
s_{n,k}\simeq \binom{n}{k}(\gamma+k\punt\alpha)^{n-k}.
\end{equation}
The Riordan array \eqref{id:Rio} will be named the {\it Riordan
array} of $(\gamma,\alpha)$. The following relation among Sheffer
sequences and Riordan arrays comes trivially.
\begin{thm}
The infinite lower triangular matrix $(s_{n,k})_{n,k}$ is the
Riordan array of $(\gamma,\alpha)$ if and only if the polynomial
sequence $(s_n(x))_{n\geq 0}$ defined by
\[s_n(x)=\sum_{k=0}^ns_{n,k}x^k\]
is the Sheffer sequence of $(\gamma,\alpha)$.
\end{thm}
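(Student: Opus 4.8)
The plan is to prove both implications by recognizing that the Riordan array of $(\gamma,\alpha)$ and the coefficient array of the Sheffer sequence of $(\gamma,\alpha)$ are prescribed by one and the same umbral expression, so that the entire content of the statement has already been packaged into the identities \eqref{id:sheffcoeff}, \eqref{id:sheffmom} and \eqref{id:Rio}. First I would fix the dictionary between the two objects: for a lower triangular matrix $(s_{n,k})_{n,k}$ and the associated polynomials $s_n(x)=\sum_{k=0}^n s_{n,k}x^k$, the entry $s_{n,k}$ is exactly the coefficient of $x^k$ in $s_n(x)$, and the triangular shape makes the passage from the matrix to the sequence a purely formal bijection. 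The only real task is then to match the two prescriptions of these coefficients.

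For the forward implication, I would assume that $(s_{n,k})_{n,k}$ is the Riordan array of $(\gamma,\alpha)$. By \eqref{id:Rio} this means $s_{n,k}\simeq \binom{n}{k}(\gamma+k\punt\alpha)^{n-k}$ for all $n\geq k\geq 0$. Substituting into $s_n(x)=\sum_{k=0}^n s_{n,k}x^k$ yields $s_n(x)\simeq \sum_{k=0}^n\binom{n}{k}(\gamma+k\punt\alpha)^{n-k}x^k$, which is precisely \eqref{id:sheffmom}; hence $(s_n(x))_{n\geq 0}$ is the Sheffer sequence of $(\gamma,\alpha)$. For the converse I would run the same chain backwards: starting from the hypothesis that $(s_n(x))_{n\geq 0}$ is the Sheffer sequence of $(\gamma,\alpha)$, I read off the coefficient of $x^k$ via \eqref{id:sheffcoeff} to get $s_{n,k}\simeq \binom{n}{k}(\gamma+k\punt\alpha)^{n-k}$, and observe that this is exactly the defining relation \eqref{id:Rio} of the Riordan array of $(\gamma,\alpha)$.

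I do not expect any genuine analytic or combinatorial obstacle here, since the statement is essentially a tautology once the umbral translations of the two constructions have been carried out; the real work was spent earlier in deriving \eqref{id:sheffcoeff} and \eqref{id:Rio}. The single point deserving care is purely bookkeeping: one must confirm that the generating-function data $f_\gamma(z)=A(z)$ and $f_\alpha(z)=B(z)$ used to build the array $s_{n,k}=n![z^n]A(z)(zB(z))^k/k!$ are the same data used to build the Sheffer sequence, so that the equivalence $\simeq$ of individual coefficients does amount to equality of the honest polynomial-valued entries $s_{n,k}\in\C[\mbf{x}]$ and of the polynomials $s_n(x)$. Thus the theorem follows immediately by comparing \eqref{id:sheffcoeff}, \eqref{id:sheffmom} and \eqref{id:Rio}, with the lower triangular structure ensuring that the correspondence between array and sequence is exact.
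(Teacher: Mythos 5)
Your proof is correct and coincides with the paper's own argument, which likewise consists of comparing \eqref{id:sheffcoeff} with \eqref{id:Rio} and observing that both prescribe the same umbral expression $\binom{n}{k}(\gamma+k\punt\alpha)^{n-k}$ for the entries. The extra bookkeeping you flag (that the same data $f_\gamma(z)=A(z)$, $f_\alpha(z)=B(z)$ underlie both constructions) is built into the paper's conventions and does not change the substance.
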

\begin{proof}
It follows by comparing \eqref{id:sheffcoeff} and \eqref{id:Rio}.
\end{proof}
The Fundamental Theorem of the Riordan arrays~\cite{WW} can be easily stated
by replacing $x$ with a umbra $\eta$ in \eqref{id:sheff} and
\eqref{id:sheffmom}.
\begin{thm}[Fundamental Theorem of the Riordan arrays]
Let $(s_{n,k})_{n,k}$ be the Riordan array of $(\gamma,\alpha)$ and
let $(a_n)_{n\geq 0}$ be the sequence represented by $\eta$. Then
we have
\[(s_{n,k})_{n,k}\left(
                   \begin{array}{c}
                     1 \\
                     a_1 \\
                     a_2 \\
                     \vdots \\
                   \end{array}
                 \right)=
                 \left(
                   \begin{array}{c}
                     1 \\
                     b_1 \\
                     b_2 \\
                     \vdots \\
                   \end{array}
                 \right)
\]
if and only if $(b_n)_{n\geq 0}$ is represented by $\gamma+\eta\punt\beta\punt\alpha_\der$.
\end{thm}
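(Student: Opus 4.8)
The plan is to read the matrix equation entrywise as $b_n=\sum_{k=0}^n s_{n,k}a_k$ with $a_0=1$, and to show that this single quantity is exactly the $n$th moment of $\gamma+\eta\punt\beta\punt\alpha_\der$. Since both the matrix identity and the representation statement amount to declaring $b_n$ equal to one and the same value, the biconditional will close immediately once that value is identified. Concretely, I first record that $(a_n)_{n\geq 0}$ being represented by $\eta$ gives $a_k=E[\eta^k]$, and in particular $a_0=E[\eta^0]=1$, matching the leading entry of the input column; and that by \eqref{id:Rio} the array entries satisfy $s_{n,k}\simeq\binom{n}{k}(\gamma+k\punt\alpha)^{n-k}$.

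The heart of the argument is to substitute the umbra $\eta$ for the variable $x$ in the two equivalent descriptions \eqref{id:sheffmom} and \eqref{id:sheffumbra} of the Sheffer sequence of $(\gamma,\alpha)$. Forming the umbral polynomial $\sum_{k=0}^n s_{n,k}\eta^k$ and evaluating gives precisely $\sum_{k=0}^n s_{n,k}a_k=b_n$, the matrix product. To rewrite this, I take $\eta$ uncorrelated with $\gamma$ and with the copies $\alpha',\ldots,\alpha^{\sst(k)}$ underlying $k\punt\alpha$, so that the uncorrelation property lets me pull each $\eta^k$ inside the evaluation term by term: $E[(\gamma+k\punt\alpha)^{n-k}]\,E[\eta^k]=E[(\gamma+k\punt\alpha)^{n-k}\eta^k]$. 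Summing over $k$ recovers $E$ applied to the right-hand side of \eqref{id:sheffmom} with $x$ replaced by $\eta$, and since \eqref{id:sheffmom} and \eqref{id:sheffumbra} are two expressions of the same object, this equals $E[(\gamma+\eta\punt\beta\punt\alpha_\der)^n]$.

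With the identity $\sum_{k=0}^n s_{n,k}a_k=E[(\gamma+\eta\punt\beta\punt\alpha_\der)^n]$ in hand, the theorem follows at once: the matrix equation asserts $b_n=\sum_{k=0}^n s_{n,k}a_k$, the representation statement asserts $b_n=E[(\gamma+\eta\punt\beta\punt\alpha_\der)^n]$, and the two right-hand sides coincide for every $n$. One may phrase the same conclusion via invertibility: because $s_{n,n}\simeq(\gamma+n\punt\alpha)^{0}\simeq 1$, the Riordan array is lower triangular and unipotent, hence the map $(a_n)\mapsto(b_n)$ is a bijection, and the moment sequence of $\gamma+\eta\punt\beta\punt\alpha_\der$ is its unique image on the sequence of $\eta$.

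I expect the only genuinely delicate point to be the legitimacy of substituting an umbra for the scalar variable $x$. One must choose $\eta$ fresh, that is uncorrelated from every umbra occurring in the description of $(\gamma,\alpha)$, so that $\eta^k$ detaches under $E$ and so that carrying $\eta$ into the two umbral forms of $s_n(x)$ preserves their equivalence; the expansion \eqref{id:comp} underlying \eqref{id:sheffmom} goes through verbatim precisely because $\eta$ behaves, under $E$, like the commuting scalar it replaces. Everything else is a direct appeal to \eqref{id:Rio}, \eqref{id:sheffmom}, and \eqref{id:sheffumbra}.
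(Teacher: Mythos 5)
Your proposal is correct and matches the paper's approach: the paper's entire proof is the one-line remark preceding the theorem, namely that the result follows ``by replacing $x$ with a umbra $\eta$'' in the umbral descriptions \eqref{id:sheffumbra} and \eqref{id:sheffmom} of the Sheffer sequence, which is precisely the substitution you carry out, with the uncorrelation of $\eta$ from $\gamma$ and the copies of $\alpha$ justifying the termwise factorization $E[(\gamma+k\punt\alpha)^{n-k}\eta^k]=E[(\gamma+k\punt\alpha)^{n-k}]E[\eta^k]$. Your additional observations (that $a_0=1$ matches the leading entry, and that unipotence of the array makes $(a_n)\mapsto(b_n)$ bijective) are sound refinements of the same argument rather than a different route.
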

Let
\[s_n(x)=\sum_{k=0}^ns_{n,k}x^k \text{ and }r_n(x)=\sum_{k=0}^nr_{n,k}x^k.\]
It is plain that
\[s_n(\bo{r}(x))=\sum_{k=0}^n\left(\sum_{i=k}^ns_{n,i}r_{i,k}\right)x^k.\]
However, $\sum s_{n,i}r_{i,k}$ is nothing but the
$(n,k)$-entry in the array $(s_{n,k})_{n,k}(r_{n,k})_{n,k}$. This
way, the identity \eqref{id:umbcomp} also encodes the
multiplication of Riordan arrays: the product of the Riordan array of $(\gamma,\alpha)$ with the Riordan array of $(\eta,\delta)$ is the
Riordan array of $(\gamma+\delta\punt\beta\punt\alpha_\der,\alpha+\delta\punt\beta\punt\alpha_\der)$.
Clearly, the identity is given by $(\varepsilon,\varepsilon)$.
Moreover, being $s_{n,n}=1$ for all $n\geq 0$, we see that
$(s_{n,k})_{n,k}$ is invertible and the set of all the Riordan
arrays  of type \eqref{id:Rio} is a group with respect to the
usual matrix multiplication. Thanks to \eqref{id:umbcomp}, we may
write
\[(\gamma,\alpha)^{-1}=(-1\punt\K_{\gamma,\alpha},-1\punt\K_\alpha),\]
where $(\gamma,\alpha)^{-1}$ denotes the unique array such that
\[(\gamma,\alpha)^{-1}(\gamma,\alpha)=(\gamma,\alpha)(\gamma,\alpha)^{-1}=(\varepsilon,\varepsilon).\]
This means that the Riordan array of $(-1\punt\K_{\gamma,\alpha},-1\punt\K_\alpha)$ is the inverse of
the Riordan array of $(\gamma,\alpha)$, or equivalently, that the
umbral composition of the Sheffer sequence of $(\gamma,\alpha)$ with
the Sheffer sequence of $(-1\punt\K_{\gamma,\alpha},-1\punt\K_\alpha)$
is $(x^n)_{n\geq 0}$.
\begin{rem}
Classically, the (ordinary) Riordan array $(a_{n,k})_{n,k}$ of $(f(z),g(z))$ is defined by
\[a_{n,k}=[z^n]f(z)\left(zg(z)\right)^k,\]
with $f(0)\neq 0\neq g(0)$. Without lose of generality, let $f(0)=1=g(0)$ and assume $e^{\gamma z}\simeq f(z)$ and $e^{\alpha z}\simeq g(z)$. We have
\[a_{n,k}\simeq\frac{(\gamma+k\punt\alpha)^{n-k}}{(n-k)!}.\]
If $(s_{n,k})_{n,k}$ is defined by \eqref{id:Rio} then it is plain that the map $(a_{n,k})_{n,k}\mapsto(s_{n,k})_{n,k}$ acts linearly with respect to the matrix multiplication. Indeed, we have
\[\binom{n}{k}(\sigma+k\punt\rho)^{n-k}\simeq\sum_{i=0}^n\binom{n}{i}(\gamma+i\punt\alpha)^{n-i}\binom{i}{k}(\eta+k\punt\delta)^{i-k}\]
if and only if
\[\frac{(\sigma+k\punt\rho)^{n-k}}{(n-k)!}\simeq\sum_{i=0}^n\frac{(\gamma+i\punt\alpha)^{n-i}}{(n-i)!}\frac{(\eta+k\punt\delta)^{i-k}}{(i-k)!}.\]
Finally, since the exponential Riordan group is isomorphic to the group of Sheffer sequences (with umbral composition as operation), then such an isomorphisms extends immediately to ordinary Riordan arrays. This way we recover one of the main results of~\cite{HHS}.
\end{rem}

Now, let $x,y$ be commuting variables, and let $q,t$ be two
parameters. Set $\bar{u}=-1\punt\chi\punt -1$ and consider the
following umbra,
\begin{equation}\label{id:chiumbra}
t\punt\bar{u}+y\punt\chi\punt x\punt\beta\punt(q\punt\bar{u})_\der.
\end{equation}
If $(P_n(x,y;q,t))_{n\geq 0}$ is its sequence of moments, then
from \eqref{id:sheffumbra} and \eqref{id:sheffmom}, we obtain
\[P_n(x,y;q,t)\simeq \sum_{k=0}^n\binom{n}{k}\left(t\punt\bar{u}+kq\punt \bar{u}\right)^{n-k}(y\punt\chi\punt x)^k.\]
Moreover, since
\[\frac{\left(t\punt\bar{u}+kq\punt \bar{u}\right)^{n-k}}{(n-k)!}\simeq \frac{\left((t+kq)\punt \bar{u}\right)^{n-k}}{(n-k)!}\simeq \binom{n-k+t+kq-1}{n-k},\]
and being
\[\frac{(y\punt\chi\punt x)^k}{k!}\simeq  \binom{y}{k}x^k,\]
then the following explicit formula for $P_n(x,y;q,t)$ can be
derived,
\begin{equation}\label{id:formula}P_n(x,y;q,t)=n!\sum_{k=0}^n\binom{n-k+t+kq-1}{n-k}\binom{y}{k}x^k.\end{equation}
Thanks to \eqref{id:chiumbra}, the generating
function of $(P_n(x,y;q,t))_{n\geq 0}$ is
\[1+\sum_{n\geq 1}P_n(x,y;q,t)\frac{z^n}{n!}=\frac{1}{(1-z)^{t}}\left(1+\frac{xz}{(1-z)^q}\right)^y.\]
The polynomial sequence $(P_n(x,y;q,t))_{n\geq 0}$ specializes in
well-known families of polynomials, possibly not Sheffer
sequences, when the variables $x,y$, and the parameters $q,t$ are
chosen in a suitable way. This way, from \eqref{id:formula} we get
easily their expansion in terms of some classical basis.
\begin{ex}[Tchebychev polynomials of the II kind]
These are orthogonal polynomials $(U_n(x))_{n\geq 0}$ with generating function
\[\sum_{n\geq 1}U_n(x)z^n=\frac{1}{1-2xz+z^2}.\]
They are obtained by setting $(x,y;q,t)=(-2x+2,-1;2,2)$. identity
\eqref{id:formula} gives the expansion of $T_n(x)$ in terms of the
basis $((x-1)^n)_{n\geq 0}$,
\begin{align*}
U_n(x)&=\sum_{k=0}^n\binom{n+k+1}{n-k}2^k(x-1)^k.
\end{align*}
\end{ex}
\begin{ex}[Gegenbauer polynomials]
These are orthogonal polynomials $(P_n^{(\lambda)}(x))_{n\geq 0}$ with generating function
\[\sum_{n\geq 1}P_n^{(\lambda)}(t)x^n=\left(\frac{1}{1-2xz+z^2}\right)^\lambda.\]
Of course, they generalize the Tchebychev polynomials of the II kind and are obtained by setting $(x,y;q,t)=(-2x-2,-\lambda;2,2\lambda)$. identity \eqref{id:formula} gives
\[P_n^{(\lambda)}(x)=\sum_{k=0}^n\binom{n+2\lambda+k-1}{n-k}\binom{\lambda+k-1}{k}2^k(x-1)^k.\]
\end{ex}
\begin{ex}[Meixner of I kind]
This orthogonal Sheffer sequence has generating function
\[\sum_{n\geq 0}{\frak m}_n\left(x;b,c\right)\frac{z^n}{n!}=\left(\frac{1}{1-z}\right)^b\left(\frac{1-\frac{1}{c}z}{1-z}\right)^x,\]
with $c\neq 0,1$ and $b\neq 0,-1,-2,\ldots$. So, we recover them when $(x,y;q,t)=((c-1)/c,x;1,b)$ and obtain
\[{\frak m}_n\left(x;b,c\right)=n!\sum_{k=0}^{n}\binom{n+b-1}{n-k}\left(\frac{1-c}{c}\right)^k\binom{x}{k}.\]
\end{ex}
\begin{ex}[Mittlag-Leffler and Pidduck polynomials]
The Mittlag-Leffler polynomials $(M_n(x))_{n\geq 0}$ are the following binomial sequence
\[\sum_{n\geq 0}M_n(x)\frac{z^n}{n!}=\left(\frac{1+z}{1-z}\right)^x.\]
Hence, they can be thought as $M_n(x)={\frak m}_n(x;0,-1)$ and we get them by setting $(x,y;q,t)=(2,x;1,0)$. Their explicit formula in terms of the binomial basis $\left(\binom{x}{n}\right)_{n\geq 0}$ is
\[M_n(x)=n!\sum_{k=1}^{n}\binom{n-1}{n-k}2^k\binom{x}{k}.\]
Finally, Pidduck polynomials arise as the Sheffer sequence given by
\[\sum_{n\geq 0}P_n(x)\frac{z^n}{n!}=\frac{1}{1-z}\left(\frac{1+z}{1-z}\right)^x.\]
We set $(x,y;q,t)=(2,x;1,1)$ and obtain
\[P_n(x)=n!\sum_{k=0}^{n}\binom{n}{n-k}2^k\binom{x}{k}.\]
\end{ex}
\section*{Acknowledgements}
The author thanks Domenico Senato for his comments and suggestions improving the technical quality of this paper.
\end{document}